\newtheorem{theorem}{Theorem}
\newtheorem{definition}[theorem]{Definition}
\newtheorem{lem}[theorem]{Lemma}
\newtheorem{remark}{Remark}
\newcommand{\R}{\mathbb R}
\newcommand{\pl}{\partial}
\numberwithin{equation}{section}
\newcommand{\na}{\nabla}
\newcommand{\vphi}{\varphi}
\newcommand{\lt}{\left}
\newcommand{\rt}{\right}
 \address{Department of Mathematics\\ Columbia University \\ New York, NY 10027, USA}
  \email{pnchen@math.columbia.edu}
 \address{Department of Mathematics\\ Columbia University \\ New York, NY 10027, USA}
  \email{mtwang@math.columbia.edu}
 \address{Department of Mathematics\\ Columbia University \\ New York, NY 10027, USA}
\curraddr{Department of Mathematics\\ Michigan State University \\East Lansing, MI 48824, USA}
  \email{yw2293@math.columbia.edu}
\begin{document}
\title[timeflat surface]{Rigidity of time-flat surfaces in the Minkowski spacetime}
\author{Po-Ning Chen, Mu-Tao Wang, and Ye-Kai Wang}
\begin{abstract}
A time-flat condition on spacelike 2-surfaces in spacetime is considered here. This condition is  analogous to the constant torsion condition for curves in a three-dimensional space and has been studied in \cite{Bray-Jauregui, Chen-Wang-Yau1, Chen-Wang-Yau2,  Wang-Yau1, Wang-Yau2}. In particular, any 2-surface in a static slice of a static spacetime is time-flat. In this article, we address the question in the title and prove several local and global rigidity theorems for such surfaces in the Minkowski and Schwarzschild spacetimes. Higher dimensional generalizations are also considered.
\end{abstract}

\maketitle

\section{Introduction} 
The geometry of spacelike 2-surfaces in spacetime plays a crucial role in general relativity. Penrose's singularity theorem predicts future singularity formation from the existence of a trapped 2-surface. A black hole is quasi-locally described by a marginally outer trapped 2-surface. These conditions can be expressed in terms of the mean curvature vector field $H$ of the 2-surface. $H$ is the unique normal vector field determined by the variation of the area functional and is ultimately connected to the warping of spacetime in the vicinity of the 2-surface. It is thus not surprising that several definitions of quasi-local mass in general relativity are closely related to the mean curvature vector field. In particular, both the Hawking mass \cite{Hawking} and the Brown-York-Liu-Yau mass \cite{Brown-York, Liu-Yau} involve the norm of the mean curvature vector field $|H|$. In the new definition of quasi-local mass in \cite{Wang-Yau1, Wang-Yau2}, in addition to $|H|$, the direction of the mean curvature vector field is
  also utilized. When the mean curvature vector field is spacelike everywhere on $\Sigma$ (thus $|H|>0$), the direction of $H$  defines a connection one-form $\alpha_H$ of the normal bundle  (see Definition \ref{mean_curvature_gauge} for the precise definition of $\alpha_H$). The quasi-local mass in \cite{Wang-Yau1, Wang-Yau2} is defined in terms of the induced metric $\sigma$ on the surface, $|H|$, and $\alpha_H$. In particular, the condition \begin{equation}\label{timeflat} div_\sigma(\alpha_H)=0\end{equation}
 implies that the isometric  embedding of $\Sigma$ into $\R^3\subset \R^{3,1}$ is an optimal isometric embedding in the sense of \cite{Wang-Yau1, Wang-Yau2}. Recently, Bray and Jauregui \cite{Bray-Jauregui} discovered a very interesting monotonicity property of the Hawking mass along surfaces that satisfy the condition \eqref{timeflat}. Such surfaces are said to be ``time-flat" in \cite{Bray-Jauregui} and include all 2-surfaces in a time-symmetric initial data set.

 For curves in $\R^3$, the direction of the mean curvature vector corresponds to the normal of the curve \cite[pages 16-17]{do Carmo}. The connection 1-form $\alpha_H$ is nothing but the torsion of the curve (see Definition \ref{mean_curvature_gauge}) and condition (\ref{timeflat}) simply says the torsion is constant. 
Weiner \cite{Weiner1, Weiner2} constructed simple closed curves with constant torsion in $\R^3$ that do not lie in a totally geodesic $\R^2$. On the other hand, in \cite{Bray-Jauregui2}, Bray and Jauregui proved that a curve in $\R^3$ with constant torsion must lie in a totally geodesic $\R^2$ if it can be written as a graph over a simple closed curve in $\R^2$. 

 A natural rigidity question raised by Bray \cite{Bray} is ``Must a time-flat surface in the Minkowski spacetime lie in a totally geodesic $\R^3$?"
 From the above analogy between curves and surfaces, one expects some global condition is needed in order for the rigidity property of time-flat surfaces to hold.   
 In this article, we prove several global  and local rigidity theorems for time-flat surfaces in the Minkowski and Schwarzschild spacetimes under various conditions. 

We first prove a local rigidity theorem that holds in all dimensional Minkowski spacetimes:
\vskip 10pt
\noindent {\bf Theorem \ref{thm_local}.} {\it Let $n \ge 3$. Suppose $\Sigma$ is a mean convex hypersurface which lies in a totally geodesic $\R^n$ in the $(n+1)$-dimensional Minkowski spacetime $\R^{n,1}$, then $\Sigma$ is locally rigid as a time-flat $(n-1)$-dimensional submanifold in $\R^{n,1}$. In other words, any infinitesimal deformation of $\Sigma$ that preserves the time-flat condition must be a deformation in the $\R^n$ direction, a deformation that is induced by a Lorentz transformation of $\R^{n,1}$, or a combination of these two types of deformations.}
\vskip 10pt

We also prove three global rigidity theorems:
\vskip 10pt
\noindent {\bf Theorem \ref{global 2}.}
{\it Suppose $\Sigma$ is a time-flat 2-surface in $\R^{3,1}$ such that $\alpha_H =0$ and $\Sigma$ is a topological sphere, then $\Sigma$ lies in a totally geodesic hyperplane.}

\vskip10pt
\noindent {\bf Theorem \ref{global_axial}.}
{\it Suppose $\Sigma$ is a time-flat  2-surface in  $\R^{3,1}$ and $\Sigma$ is a topological sphere. If
$\Sigma$ is invariant under a rotational Killing vector field, then $\Sigma$ lies in a totally geodesic hyperplane in $\R^{3,1}$.}

\vskip 10pt

The last global rigidity theorem holds for the $(n+1)$-dimensional Schwarzschild spacetime with metric
\[ - (1- \frac{2m}{r^{n-2}}) dt^2 + \frac{1}{1-\frac{2m}{r^{n-2}}} dr^2 + r^2 g_{S^{n-1}}, \] where $m\geq 0$ is the mass and $g_{S^{n-1}}$ is the 
standard metric on a unit sphere $S^{n-1}$.  

\vskip 10pt
\noindent {\bf Theorem \ref{global3}.}
{\it Let $n \ge 3$ and $\Sigma^{n-1} $ be a connected spacelike codimension-2 submanifold in the $(n+1)$-dimensional Schwarzschild spacetime with mass $m$. Suppose $\alpha_H=0$ and $\Sigma$ is star-shaped with respect to $Q$ (see Definition \ref{star-shaped}) where $Q = rdr \wedge dt$.  Then, when $m>0$, $\Sigma$ lies in a time-slice; when $m=0$, $\Sigma$ lies in a totally geodesic hyperplane.}

\vskip 10pt
Note that $Q$ is a conformal Killing-Yano 2-form on the Schwarzschild spacetime, see \cite{JL}. Theorem \ref{thm_local} follows from applying the Reilly formula to the linearized equation \eqref{linearized}. Theorem \ref{global 2} is proved using the Codazzi equation and a theorem of Yau from \cite{Yau}.  The proof of Theorem \ref{global_axial} is reduced to Theorem \ref{global 2} using the structure of axially symmetric surfaces in $\R^{3,1}$. Theorem \ref{global3} is proved using a Minkowski type integral formula involving the conformal Killing-Yano 2-form on the Schwarzschild spacetime.
 
We review the geometry of spacetime surfaces in \S 2 and define the time-flat condition and the higher dimensional generalization. Theorem \ref{thm_local} is proved in \S 3, Theorem \ref{global 2} is proved in \S 4, Theorem \ref{global_axial} is proved in \S 5, and Theorem \ref{global3} is proved in \S 6.

\section{Geometry of spacelike 2-surface in spacetime}

Let $N$ be a time-oriented spacetime. Denote the Lorentzian metric on $N$ by $\langle \cdot, \cdot\rangle$ and covariant derivative by $\nabla^N$. Let $\Sigma$ be a closed space-like two-surface embedded in $N$. Denote the induced Riemannian metric on $\Sigma$ by $\sigma$ and the gradient and Laplace operator of $\sigma$ by $\nabla$ and $\Delta$, respectively.

 Given any two tangent vector $X$ and $Y$ of $\Sigma$, the second fundamental form of $\Sigma$ in $N$ is given by $\mbox{II}(X, Y)=(\nabla^N_{X} Y)^\perp$ where $(\cdot)^\perp$ denotes the projection onto the normal bundle of $\Sigma$. The mean curvature vector is the trace of the second fundamental form, or ${H}=tr_\Sigma \mbox{II}=\sum_{a=1}^2 \mbox{II}(e_a, e_a)$ where $\{e_1, e_2\}$ is an orthonormal basis of the tangent bundle of $\Sigma$.

 The normal bundle is of rank two
with structure group $SO(1,1)$ and the induced metric on the normal bundle is of signature $(-, +)$. Since the Lie algebra of $SO(1,1)$ is isomorphic to $\R$, the connection form of the normal bundle is a genuine 1-form that depends on the choice of the normal frames. The curvature of the normal bundle is then given by an exact 2-form which reflects the fact that any $SO(1,1)$ bundle is topologically trivial. Connections of different choices of normal frames differ by an exact form. We define (see \cite{Wang-Yau2}):
\begin{definition}
Let $e_3$ be a space-like unit normal along $\Sigma$, the connection one-form determined by $e_3$ is defined to be
\begin{equation}\label{connection}\alpha_{e_3} =\langle \nabla^N_{(\cdot)} e_3, e_4\rangle \end{equation} where $e_4$  is the future-directed time-like unit normal that is orthogonal to $e_3$.
\end{definition}

\begin{definition}\label{mean_curvature_gauge}
Suppose the mean curvature vector field $H$ of $\Sigma$ in $N$ is a spacelike vector field. The connection one-form in mean curvature gauge is \[\alpha_{H} =\langle \nabla^N_{(\cdot)} e_3, e_4\rangle,\] where $e_3=-\frac{H}{|H|}$ and $e_4$ is the future-directed timelike unit normal that is orthogonal to $e_3$.
\end{definition}

\begin{definition}
We say $\Sigma$ is time-flat if $div_\sigma( \alpha_H)=0.$
\end{definition}
\begin{remark}\label{higher dimension}
For spacelike codimension-2 submanifolds in a time-oriented $(n+1)$-dimensional spacetime $N$, the second fundamental form $\mbox{II}$ and the mean curvature vector $H$ can be defined in the same manner. Let $e_n = -\frac{H}{|H|}$ and $e_{n+1}$ be the future timelike normal orthogonal to $e_n$. The connection one-form with respect to mean curvature gauge is defined to be
\[ \alpha_H = \langle \na^N_{(\cdot)}e_n, e_{n+1} \rangle. \]
\end{remark}

\section{Local rigidity of mean convex hypersurfaces in $\R^n\subset \R^{n,1}$}
The local rigidity problem can be formulated as follows. Suppose $\Sigma$ is time-flat and is given by an embedding $X$. Suppose $V$ is a smooth vector field along $\Sigma$ such that the image of $X(s)=X+sV$ is infinitesimally time-flat in the sense the derivatives of  $div_\sigma( \alpha_H)$ along the image with respect to $s$ is zero when $s=0$. Do all such $V$ correspond to trivial deformations?

It is easy to see that submanifolds lying in a totally geodesic slice is time-flat. We assume $\partial\Omega = \Sigma \subset \{ t=0\} = \mathbb{R}^n$. It is clear that 
any deformation in the $\R^n$ direction preserves the time-flat condition. On the other hand, a Lorentz transformation preserves the geometry of $\Sigma$ and thus preserves the time-flat condition.

Let $\nabla, \Delta$ denote the covariant derivative and Laplacian of the induced metric $\sigma.$ Let $h_{ab},h$ be the second fundamental form and mean curvature of $\Sigma \subset \mathbb{R}^n$ with respect to the outward unit normal $\nu.$
\begin{theorem}\label{thm_local}
Let $n \ge 3$. Suppose $\Sigma$ is a mean convex hypersurface which lies in a totally geodesic $\R^n$ in the $(n+1)$-dimensional Minkowski spacetime $\R^{n,1}$, then $\Sigma$ is locally rigid as a time-flat $(n-1)$-dimensional submanifold in $\R^{n,1}$. In other words, any infinitesimal deformation of $\Sigma$ that preserves the time-flat condition must be a deformation in the $\R^n$ direction, a deformation that is induced by a Lorentz transformation of $\R^{n,1}$, or a combination of these two types of deformations.
\end{theorem}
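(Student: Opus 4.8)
The plan is to reduce the problem to a single scalar equation for the time‑direction component of the deformation, derive the linearized time‑flat equation \eqref{linearized}, and then close the argument by an integration by parts on $\Sigma$ combined with Reilly's formula on the enclosed domain.

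First I would reduce the deformation. Write a general variation field along $\Sigma$ as $V = V^\top + \phi\,\nu + f\,\partial_t$, splitting into a part tangent to $\Sigma$, a part in the $\R^n$-normal direction $\nu$, and a part in the time direction $\partial_t$. The tangential part generates a reparametrization and leaves the scalar $div_\sigma(\alpha_H)$ unchanged; the part $\phi\,\nu$ keeps $\Sigma$ inside the totally geodesic slice $\R^n$ to first order, and every hypersurface of $\R^n\subset\R^{n,1}$ is time-flat (its mean curvature vector lies in $\R^n$, so $e_4=\partial_t$ and $\alpha_H=\langle\nabla^N_{(\cdot)}e_3,\partial_t\rangle=0$), so this part contributes nothing to the linearization. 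Hence it suffices to treat $V=f\,\partial_t$, and the assertion of the theorem is that the linearized operator in $f$ has kernel exactly the restrictions to $\Sigma$ of the affine functions on $\R^n$, which are precisely the time components of the deformations generated by the Lorentz boosts ($f=x^i|_\Sigma$) and by time translation ($f=\mathrm{const}$).

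Next I would linearize. Along $X_s=X+sf\,\partial_t$ the induced metric changes only at order $s^2$, and one can take normal frames $\bar e_3=\nu+O(s^2)$ and $\bar e_4=\partial_t+s\,\nabla f+O(s^2)$. A direct computation of the second fundamental form gives the mean curvature vector $H_s=-h\,\bar e_3+s(\Delta f)\,\bar e_4+O(s^2)$ (in the sign convention for which mean convexity reads $h>0$, so $H=-h\nu$), so passing to the mean curvature gauge $e_3=-H_s/|H_s|$ amounts to a boost by angle $\theta=-s\,\Delta f/h+O(s^2)$. Using the $SO(1,1)$ transformation rule $\alpha_{e_3}=\alpha_{\bar e_3}-d\theta$ together with $\alpha_{\bar e_3}=s\,\mbox{II}(\nabla f,\cdot)+O(s^2)$, I obtain $\alpha_H=s\big[\mbox{II}(\nabla f,\cdot)+d(\Delta f/h)\big]+O(s^2)$, whence, after the Codazzi identity $\nabla^a h_{ab}=\nabla_b h$ in the flat ambient space, the linearized equation \eqref{linearized}
\[ h^{ab}\nabla_a\nabla_b f+\langle\nabla h,\nabla f\rangle+\Delta\!\left(\frac{\Delta f}{h}\right)=0. \]
That affine functions solve this is a short check resting on $\Delta\nu=\nabla h-|\mbox{II}|^2\nu$ and $\nabla_a\nabla_b(x^i|_\Sigma)=-h_{ab}\nu^i$.

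Then comes the key identity: multiplying \eqref{linearized} by $f$, integrating over the closed surface $\Sigma$, integrating by parts twice and again invoking Codazzi, the first‑order terms cancel and I am left with
\[ \int_\Sigma\frac{(\Delta f)^2}{h}\,dA=\int_\Sigma h^{ab}\nabla_a f\,\nabla_b f\,dA=\int_\Sigma\mbox{II}(\nabla f,\nabla f)\,dA. \]
Finally I would invoke Reilly's formula. Let $\Omega\subset\R^n$ be the domain bounded by $\Sigma$ and let $u$ solve $\Delta u=0$ in $\Omega$ with $u|_\Sigma=f$. Since $\R^n$ is Ricci-flat, Reilly's formula reads
\[ -\int_\Omega|\nabla^2 u|^2=\int_\Sigma\Big[h\,u_\nu^2+2u_\nu\,\Delta f+\mbox{II}(\nabla f,\nabla f)\Big]. \]
Substituting the identity above for the last term and completing the square gives
\[ -\int_\Omega|\nabla^2 u|^2=\int_\Sigma h\Big(u_\nu+\frac{\Delta f}{h}\Big)^2\ge 0, \]
where mean convexity $h>0$ is exactly what renders the right-hand side nonnegative. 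Both sides must then vanish, so $\nabla^2 u\equiv 0$, $u$ is affine on the connected $\Omega$, and $f=u|_\Sigma$ is the restriction of an affine function, i.e. a trivial deformation. I expect the main obstacle to be the linearization step: tracking the first-order tilt of the mean curvature gauge (the boost angle $\theta$) carefully enough to fix the signs in \eqref{linearized}, since it is precisely that sign structure which allows the boundary integrand to reassemble, through Reilly's formula, into the sign-definite square controlled by $h>0$.
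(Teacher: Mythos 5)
Your proposal is correct and follows essentially the same route as the paper: reduce to a normal deformation $f\,\partial_t$, derive the linearized equation \eqref{linearized} (your boost-angle bookkeeping reproduces the paper's computation of $\delta e_n$ and $\delta e_{n+1}$), pair it with $f$, and combine with Reilly's formula for the harmonic extension to complete the square $\int_\Omega |D^2u|^2+\int_\Sigma\bigl(\tfrac{\Delta f}{\sqrt h}+\sqrt h\,u_\nu\bigr)^2=0$. No substantive differences from the paper's argument.
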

\begin{proof}
In this proof, we denote $\alpha_H$  by $\alpha$. Since $\delta (div_\sigma \alpha)$ depends linearly on infinitesimal deformation and any deformation in $\R^n$ corresponds to trivial deformations, it suffices to consider deformations in the time direction. Let $V= f \frac{\partial}{\partial t}$ for a smooth function $f$ defined on $\Sigma$ be such an infinitesimal deformation and $X(s) = (\tau(s),X^1(s),\ldots,X^n(s))$ be the corresponding deformation. Since we only vary the surface in the time direction, $X^i(s) = X^i(0)$ for $i=1,\ldots,n,$ and $\delta\tau =f$. Here $\delta$ stands for $\frac{\partial}{\partial s}\big|_{s=0}$. 

We start by computing the variation of $div_\sigma \alpha$.  The induced metrics satisfy
\begin{align}
\sigma(s)_{ab} = \sigma_{ab} - \frac{\partial \tau(s)}{\partial u^a} \frac{\partial\tau(s)}{\partial u^b}.
\end{align}
Since $\tau(0)=0$, $\delta\sigma=0.$ Let $\Delta_s$ be the Laplacian of the induced metric on $X(s).$ We have
\begin{align}
H = (\Delta_s \tau(s), \Delta_s X^1, \ldots, \Delta_s X^n).
\end{align}
$\delta\sigma=0$ implies the infinitesimal variation of Laplacian is zero. Therefore, we have
\begin{align}
\delta H &= (\Delta f) \frac{\partial}{\partial t}\\
\delta |H|^2 &= 2 \langle \delta H, -h e_n \rangle = 0\\
\delta e_n &= -\frac{\delta H}{h} + \frac{\delta |H|}{h^2} H = - \frac{\Delta f}{h} \frac{\partial}{\partial t} \label{variation of e_n}
\end{align}
Since $0= \delta \langle e_{n+1}, \frac{\partial}{\partial u^a} \rangle = \delta\langle e_{n+1},e_n \rangle,$ we have
\[ 0=\langle \delta e_{n+1}, \frac{\pl}{\pl u^a} \rangle + \langle e_{n+1}, \frac{\pl f}{\pl u^a} \frac{\pl}{\pl t} \rangle \]
and
\[ 0=\langle \delta e_{n+1}, e_n \rangle + \langle e_{n+1}, -\frac{\Delta f}{h} \frac{\partial}{\partial t}\rangle. \]
Hence
\begin{align}\label{variation of e_n+1}
\delta e_{n+1} = \nabla f - \frac{\Delta f}{h} e_n
\end{align}
We are ready to compute the variation of $\alpha$.
\begin{align*}
(\delta \alpha)_a &= \delta \langle D_a e_n,e_{n+1} \rangle \\
&= \langle (\delta D)_a e_n,e_{n+1} \rangle + \langle D_{\frac{\pl (\delta X)}{\pl u^a} } e_n,e_{n+1} \rangle + \langle D_a (\delta e_n),e_{n+1} \rangle + \langle D_a e_n, \delta e_{n+1} \rangle.
\end{align*}
Since $\delta\sigma=0$, $\delta D=0$. By (\ref{variation of e_n}) and (\ref{variation of e_n+1}), we get
\begin{align*}
(\delta \alpha)_a &= \langle D_{\frac{\partial f}{\partial u^a} \frac{\partial}{\partial t}}e_n, e_{n+1}\rangle + \langle D_a \left( - \frac{\Delta f}{h} \right) \frac{\partial}{\partial t}, \frac{\partial}{\partial t} \rangle + \langle D_a e_n, \nabla f - \frac{\Delta f}{h} e_n \rangle\\
&= \nabla_a \left( \frac{\Delta f}{h}\right) + h_{ab} \nabla^b f 
\end{align*}
and
\begin{equation}\label{linearized}
\delta(div_\sigma \alpha) = \Delta \left( \frac{\Delta f}{h} \right) + \nabla^a \left( h_{ab} \nabla^b f\right).
\end{equation}
We remark that the linearization of this operator was also derived in \cite{Chen-Wang-Yau1, Miao-Tam-Xie}. 

To prove the theorem, it suffices to show that $f$ is the restriction of linear coordinate functions on $\Sigma$. 
Let $u$ solve the Dirichlet problem
\begin{align*}
\left\{ \begin{array}{cc}
\Delta u=0 & \mbox{in }\Omega\\
u=f & \mbox{on }\Sigma
\end{array}\right.
\end{align*}
On $\R^n$, the Reilly formula \cite{Reilly} reads 
\begin{align}\label{Reilly formula}
\int_\Omega |D^2 u|^2 = -\int_\Sigma \left( h^{ab} \nabla_a f \nabla_b f + 2 (\Delta f) e_n(u) + h (e_n(u))^2 \right).
\end{align}
This was used in \cite{Miao-Tam-Xie} to derive minimizing property of the Wang-Yau quasi-local energy. 
On the other hand, multiplying $\delta (div_\sigma \alpha) = 0$ by $f$ and integrating over $\Sigma$ yield
\begin{align}\label{integrated time-flat variation}
\int_\Sigma \left(\frac{(\Delta f)^2}{h} - h^{ab}\nabla_a f\nabla_b f \right)=0.
\end{align}
Adding \eqref{Reilly formula} and \eqref{integrated time-flat variation} together and completing square, we obtain
\begin{align*}
\int_\Omega |D^2 u|^2 + \int_\Sigma \left( \frac{\Delta f}{\sqrt{h}} + \sqrt{h} e_n(u) \right)^2 =0.
\end{align*}
Hence $u$ is a linear function up to a constant.
\end{proof}

\section{Global rigidity for surfaces with $\alpha_H=0$} 

We first recall a general sufficient condition of Yau that implies a submanifold lies in another submanifold which is totally geodesic. 
\begin{theorem}\cite[Theorem 1]{Yau}\label{Yau}
Let $M$ be a submanifold in a $p$-dimensional pseudo-Riemannian manifold  $P$ with constant curvature. Let $N_1$ be a subbundle of the normal bundle of $M$ with fiber dimension $k$. Suppose the second fundamental form of $M$ with respect to any direction in $N_1$ vanishes and $N_1$ is parallel in the normal bundle. Then $M$ lies in a $(p-k)$-dimensional totally geodesic submanifold.
\end{theorem}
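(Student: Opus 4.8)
The plan is to establish a \emph{reduction of codimension}: produce a $(p-k)$-dimensional totally geodesic submanifold $P'\subset P$ containing $M$ whose tangent spaces along $M$ equal $E:=TM\oplus N_2$, where $N_2$ is the orthogonal complement of $N_1$ inside the normal bundle $N$ of $M$. Note $\dim E = \dim M + (\operatorname{rank} N - k) = p-k$, so such a $P'$ has the correct dimension. At the outset one should record that $N_1$ is implicitly assumed non-degenerate, so that the splitting $N=N_1\oplus N_2$ is well defined; this is automatic in the applications, where $N_1$ is spanned by the mean-curvature gauge frame.

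First I would show that $E$ is parallel along $M$ with respect to the ambient Levi-Civita connection $\na^P$. For $X\in TM$ one checks two generating cases. If $Y\in TM$, the Gauss formula gives $\na^P_X Y = \na^M_X Y + \mathrm{II}(X,Y)$; the first term lies in $TM\subset E$, while the second lies in $N_2$ because its $N_1$-component is, by definition, the second fundamental form in an $N_1$-direction, which vanishes by hypothesis. If $\xi\in N_2$, the Weingarten formula gives $\na^P_X\xi = -A_\xi X + \na^\perp_X\xi$; the shape-operator term lies in $TM\subset E$, and $\na^\perp_X\xi\in N_2$ because the hypothesis that $N_1$ is parallel in the normal bundle forces its orthogonal complement $N_2$ to be parallel as well (differentiate $\langle \xi,\eta\rangle=0$ for $\eta\in N_1$ and use that $\na^\perp_X\eta\in N_1$). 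Hence $\na^P_X$ preserves $E$, so $E$ --- and equally $N_1=E^\perp$ --- is a parallel subbundle of $TP|_M$.

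The final and main step is to promote this parallel data to a single global totally geodesic submanifold. Since $P$ has constant curvature, through the fixed basepoint $x_0\in M$ there is a totally geodesic submanifold $P'$ with $T_{x_0}P' = E_{x_0}$ (space forms admit totally geodesic submanifolds of every dimension tangent to every prescribed subspace; concretely $P'=\exp_{x_0}(E_{x_0})$). I would then argue $M\subseteq P'$. It is cleanest to run this in the standard model. For $P=\R^{p}$ (or the pseudo-Euclidean $\R^{p-1,1}$ relevant here) the ambient connection is ordinary differentiation, so parallelism forces $E_x$ to be a \emph{fixed} linear subspace $E_0\subset\R^p$; then for any curve $\gamma$ in the connected manifold $M$ with $\gamma(0)=x_0$ one has $\gamma'(s)\in T_{\gamma(s)}M\subset E_0$, whence $\gamma(t)-x_0=\int_0^t\gamma'(s)\,ds\in E_0$ and $M\subset x_0+E_0$, an affine and hence totally geodesic subspace. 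For the sphere and hyperbolic models one uses the position vector in the ambient flat space, together with the same parallelism, to confine $M$ to a fixed linear slice whose intersection with the space form is totally geodesic.

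The step I expect to be the main obstacle is precisely this globalization: verifying that the exponential image $P'$ is genuinely totally geodesic and that \emph{all} of $M$ lands inside it, while handling the global hypotheses (connectedness, completeness) and the pseudo-Riemannian subtleties --- in Lorentzian signature one must ensure $N_1$ is non-degenerate so that $N_2$ is a true orthogonal complement and that the leaf $\exp(E_{x_0})$ is embedded. The parallel-distribution computation in the second step is essentially formal, but it is the constant curvature of $P$ that is indispensable: it guarantees both the existence of totally geodesic submanifolds tangent to $E_{x_0}$ and that the parallelism of $E$ integrates to containment of $M$ in one such submanifold.
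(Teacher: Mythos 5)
Your reduction-of-codimension argument --- showing that $E=TM\oplus N_2$ is parallel for the ambient connection and then integrating this in the flat (or space-form) model to confine $M$ to a $(p-k)$-dimensional totally geodesic submanifold --- is correct and is essentially the proof of Yau that the paper simply cites, supplying no argument of its own beyond the remark that the Riemannian proof carries over to the pseudo-Riemannian case. Your point that $N_1$ must be non-degenerate for the orthogonal splitting $N=N_1\oplus N_2$ to exist is precisely the subtlety hidden in that remark, and it is satisfied in the paper's applications, where $N_1$ is spanned by a unit timelike normal.
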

Although in \cite{Yau}, the theorem is proved only for Riemannian manifold, the same argument works in the pseudo-Riemannian case. We apply Theorem 5 to prove the following global rigidity theorem.
\begin{theorem}\label{global 2}
Suppose $\Sigma$ is a time-flat 2-surface in $\R^{3,1}$ such that $\alpha_H =0$ and $\Sigma$ is a topological sphere, then $\Sigma$ lies in a totally geodesic $\R^3$.
\end{theorem}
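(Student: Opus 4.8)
The plan is to verify the hypotheses of Theorem \ref{Yau} for the rank-one subbundle $N_1 = \mathrm{span}(e_4)$ of the normal bundle, where $e_4$ is the future timelike unit normal in the mean curvature gauge; deleting this timelike direction will exhibit $\Sigma$ inside a $3$-dimensional totally geodesic submanifold, which is necessarily a spacelike $\R^3$ since its tangent space is the orthogonal complement of the timelike $e_4$. Note that the operative hypothesis is $\alpha_H=0$, which already forces the time-flat condition. Theorem \ref{Yau} requires two things: that $N_1$ be parallel in the normal bundle, and that the second fundamental form in the $e_4$-direction vanish. The first is immediate from $\alpha_H = 0$: since $\alpha_H = \langle \nabla^N_{(\cdot)} e_3, e_4\rangle$ is exactly the normal connection one-form of the mean curvature frame $\{e_3, e_4\}$, its vanishing means both $e_3$ and $e_4$ are parallel sections of the normal bundle, so $N_1 = \mathrm{span}(e_4)$ is parallel. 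Everything therefore reduces to showing that the symmetric scalar $2$-tensor
\begin{equation*}
k(X,Y) := \langle \mathrm{II}(X,Y), e_4\rangle
\end{equation*}
vanishes identically.

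First I would record that $k$ is trace-free: $\mathrm{tr}_\sigma k = \langle H, e_4\rangle$, and since $e_3 = -H/|H|$ and $e_4 \perp e_3$, we have $\langle H, e_4\rangle = 0$. Next I would show that $k$ is a Codazzi tensor on $\Sigma$. The ambient Codazzi equation in the flat spacetime $\R^{3,1}$ states that $(\nabla^\perp_X \mathrm{II})(Y,Z)$ is symmetric in $X$ and $Y$, because the normal projection of the ambient curvature vanishes. Pairing this identity with the parallel normal field $e_4$ and using $\nabla^\perp e_4 = 0$ converts every normal covariant derivative into an intrinsic one, yielding $(\nabla_X k)(Y,Z) = (\nabla_Y k)(X,Z)$; that is, $k$ is Codazzi with respect to the Levi-Civita connection of $\sigma$. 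This is precisely the step where $\alpha_H = 0$ is used essentially.

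The heart of the argument is then a topological input. On a Riemann surface a trace-free symmetric Codazzi tensor is the real part of a holomorphic quadratic differential: in a local conformal coordinate $z$ with $\sigma = e^{2\vphi}|dz|^2$, the trace-free condition forces the $(1,1)$-component $k_{z\bar z}$ to vanish, and the Codazzi equation applied to the index pattern $(\bar z, z, z)$ reduces, after cancelling the Christoffel terms, to $\partial_{\bar z} k_{zz} = 0$. Hence $k_{zz}\, dz^2$ is a globally defined holomorphic quadratic differential on $\Sigma$. Since $\Sigma$ is a topological sphere, which carries no nonzero holomorphic quadratic differentials, we conclude $k_{zz} \equiv 0$ and therefore $k \equiv 0$.

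With $k = 0$ the second fundamental form in the $e_4$-direction vanishes while $N_1 = \mathrm{span}(e_4)$ is parallel, so Theorem \ref{Yau} with $p = 4$ and $k = 1$ places $\Sigma$ in a $3$-dimensional totally geodesic submanifold; the deleted direction being timelike, this submanifold is a spacelike $\R^3$, as claimed. I expect the main obstacle to be the holomorphic-quadratic-differential step, both in pinning down the conformal-coordinate computation that turns trace-free plus Codazzi into the Cauchy--Riemann equation and in isolating the role of the genus-zero hypothesis. This is the only place the sphere assumption enters, which is consistent with the expectation that on higher-genus surfaces the space of holomorphic quadratic differentials is nontrivial and rigidity should fail.
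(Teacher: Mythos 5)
Your proposal is correct and follows essentially the same route as the paper: use $\alpha_H=0$ to make $\mathrm{span}(e_4)$ parallel, show the trace-free $e_4$-component of the second fundamental form gives a holomorphic quadratic differential that must vanish on the sphere, and invoke Yau's theorem. The only cosmetic difference is that you phrase the intermediate condition as $k$ being a Codazzi tensor while the paper contracts the Codazzi equation to get divergence-free; for a trace-free symmetric $2$-tensor on a surface these are the same condition, and both yield holomorphicity of $k_{zz}\,dz^2$.
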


\begin{proof}
Denote by $e_3=-\frac{H}{|H|}$ and $e_4$ to be the unit future timelike normal that is orthogonal to $e_3$. The second fundamental form of $\Sigma$ can be written as  $h^3_{ab} e_3 - h^4_{ab} e_4$ and $h^4_{ab}$ is trace-free. The Codazzi equation for $h^4_{ab}$ reads
\[ \nabla^a h^4_{ab}  - \nabla_b  tr_\sigma h^4 + (\alpha_H)^a h^3_{ab} - tr_\sigma h^3 (\alpha_H)_b=  0.  \]

Since  $\alpha_H =0$ and $h^4_{ab}$ is trace-free, this reduces to
\[ \nabla^a h^4_{ab} =  0. \]
A divergence-free symmetric trace-free 2-tensor corresponds to a holomorphic quadratic differential, which must vanish since $\Sigma$ is a topological sphere.  Let $N_1$ be the subbundle of the normal bundle spanned by $e_4$. Since $\alpha_H=0$, $N_1$ is parallel in the normal bundle. Hence by Theorem \ref{Yau} above, $\Sigma$ lies in a totally geodesic hyperplane in $\R^{3,1}$. 

\end{proof}

\section{Global rigidity for axially symmetric and time-flat surfaces in $\R^{3,1}$.}

In this section, we prove the following global rigidity theorem for time-flat axially symmetric surfaces in $\R^{3,1}$.

\begin{theorem}\label{global_axial}
Suppose $\Sigma$ is a time-flat  2-surface in  $\R^{3,1}$ and $\Sigma$ is a topological sphere. If
$\Sigma$ is invariant under a rotational Killing vector field, then $\Sigma$ lies in a totally geodesic hyperplane in $\R^{3,1}$.
\end{theorem}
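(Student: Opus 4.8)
The plan is to reduce the statement to Theorem \ref{global 2}: since $\Sigma$ is already assumed to be a time-flat topological sphere, it suffices to use the rotational symmetry to show that $\alpha_H \equiv 0$, and then invoke Theorem \ref{global 2}. After a Lorentz transformation (which preserves both the time-flat condition and the conclusion) I may assume the rotational Killing vector field is the generator $K = x^1 \partial_{x^2} - x^2 \partial_{x^1}$ of rotations in the spacelike $x^1 x^2$-plane; indeed, only an elliptic rotation in a spacelike plane has the compact circular orbits compatible with $\Sigma$ being a sphere. Because an $S^1$-action on a topological sphere has exactly two fixed points, $\Sigma$ meets the axis $\{x^1 = x^2 = 0\}$ in exactly two poles, and away from them I parametrize $\Sigma$ as a surface of revolution $X(s,\phi) = (t(s), r(s)\cos\phi, r(s)\sin\phi, z(s))$ with $K = \partial_\phi$, so that the induced metric is diagonal, $\sigma = \lambda(s)^2\, ds^2 + r(s)^2\, d\phi^2$. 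Since $\Sigma$, $H$, and hence the frame $e_3 = -H/|H|$, $e_4$ are invariant under the flow of $K$, the one-form $\alpha_H$ is $K$-invariant, so $\alpha_H = A(s)\, ds + B(s)\, d\phi$ with coefficients depending only on the meridian parameter.

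First I would show that the azimuthal component vanishes, $B = \alpha_H(K) = 0$, noting that this is a purely geometric fact independent of the time-flat hypothesis. The key observation is that $e_3$ and $e_4$ are orthogonal to $K = \partial_\phi X$, whose spatial part is azimuthal; hence the $(x^1, x^2)$-projections of $e_3$ and $e_4$ are both radial, and therefore parallel to one another. Using the invariance of the frame, $[K, e_3] = 0$, so $\na^N_K e_3 = \na^N_{e_3} K$, and a short computation gives $\na^N_{e_3} K = (0, -e_3^2, e_3^1, 0)$, whence $\alpha_H(K) = \langle \na^N_{e_3} K, e_4 \rangle = e_3^1 e_4^2 - e_3^2 e_4^1$. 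This determinant vanishes precisely because the two radial projections are parallel, so $B \equiv 0$.

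Next I would use the time-flat condition to kill the meridian component $A$. With $\alpha_H = A(s)\, ds$, the divergence computes to
\[ div_\sigma \alpha_H = \frac{1}{\lambda r}\, \partial_s \left( \frac{r}{\lambda}\, A \right), \]
so the time-flat condition $div_\sigma \alpha_H = 0$ gives $\frac{r}{\lambda} A \equiv c$ for a constant $c$. Letting $s$ approach either pole, where $r \to 0$ while $A$ and $\lambda$ remain bounded and $\lambda > 0$, forces $c = 0$; hence $A \equiv 0$ away from the poles and, by continuity, everywhere. Combining the two steps yields $\alpha_H \equiv 0$, and Theorem \ref{global 2} then shows that $\Sigma$ lies in a totally geodesic hyperplane.

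The main obstacle is the analysis at the two poles where the Killing field degenerates: one must use the topological fact that an $S^1$-action on $S^2$ has exactly two fixed points, together with the regularity of $\Sigma$ and of $\alpha_H$ there, to conclude that the integration constant $c$ is forced to vanish. A secondary point requiring care is the verification that the normal frame has radial spatial projections (equivalently, lies in the meridian hyperplanes), which is what makes the azimuthal component of $\alpha_H$ vanish automatically.
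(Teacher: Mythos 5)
Your proposal is correct, and its overall strategy coincides with the paper's: both reduce the theorem to Theorem \ref{global 2} by using the axial symmetry to show $\alpha_H\equiv 0$, and both begin by verifying that the azimuthal component $\alpha_H(\partial_\phi)$ vanishes (your determinant argument with the radial projections of $e_3,e_4$ is a coordinate-free version of the paper's Christoffel-symbol computation). The two arguments part ways in the final step. Having established $\alpha_H=\varphi(\theta)\,d\theta$, the paper simply observes that $d\alpha_H=0$, so that $\alpha_H$ is simultaneously closed and divergence-free, i.e.\ a harmonic $1$-form on a topological sphere, and hence vanishes because $H^1(S^2)=0$; no analysis at the axis is needed. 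You instead integrate the ODE $div_\sigma\alpha_H=\frac{1}{\lambda r}\,\partial_s\!\left(\frac{r}{\lambda}A\right)=0$ along the meridian and kill the integration constant by letting $r\to 0$ at the two fixed points of the circle action. Your route is more elementary (no Hodge theory), but it concentrates all the difficulty at the poles: you must justify that $A=\alpha_H(\partial_s)$ stays bounded there, which does follow from smoothness of $\alpha_H$ (available because $H$ is spacelike, implicit in the time-flat hypothesis) together with $|\partial_s|=\lambda$ bounded, so the argument closes. The Hodge-theoretic shortcut is worth noting precisely because it sidesteps the degenerate locus of the Killing field entirely.
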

\begin{proof}
Without loss of generality, we assume that the rotational Killing vector field is $\frac{\pl}{\pl\phi}$ where $(t,r,\theta,\phi)$ is the standard spherical coordinate in $\R^{n,1}$, and in terms of this coordinate system $\Sigma$ is locally given by the embedding
\[ F: (\theta,\phi) \rightarrow (t(\theta),r(\theta),\theta,\phi). \]
A basis of the tangent space of $\Sigma$ consists of 
\[ t' \frac{\pl}{\pl t} + r' \frac{\pl}{\pl r} + \frac{\pl}{\pl\theta} \mbox{ and } \frac{\pl}{\pl\phi} \]
where $()'$ stands for differentiation with respect to $\theta$. On the other hand, a basis of the normal bundle is given by
\[  Y_3 = r^2 \frac{\pl}{\pl r} - r' \frac{\pl}{\pl\theta} \mbox{ and } Y_4 = r' \frac{\pl}{\pl t} + t' \frac{\pl}{\pl r}. \]
By axial-symmetry, both normal vectors $e_3 = -\frac{H}{|H|}$ and its orthogonal complement $e_4$ can be written as linear combinations of $Y_3$ and $Y_4$ with coefficients depending only on $\theta$.
We compute
\[ \alpha_H (\frac{\pl}{\pl\phi})= \langle D_{\frac{\pl}{\pl\phi}} e_3,e_4 \rangle = \lt\langle a(\theta) \bar{\Gamma}_{\phi r}^\phi \frac{\pl}{\pl\phi} + b(\theta) \bar{\Gamma}_{\phi\theta}^\phi \frac{\pl}{\pl\phi}, e_4 \rt\rangle=0.\]
Hence $\alpha_H = \vphi(\theta) d\theta$ and $d\alpha_H=0$. Since $\Sigma$ is a topological 2-sphere, $d\alpha_H = div\alpha_H=0$ imply $\alpha_H=0$. By Theorem \ref{global 2}, $\Sigma$ lies in a totally geodesic hyperplane. 
\end{proof}

\section{Global Rigidity of Codimension-2 Submanifolds with $\alpha_H=0$ in the Schwarzschild Spacetime}
We generalize Theorem \ref{global 2} to ``star-shaped" (see Definition \ref{star-shaped}) codimension-2 submanifolds  in the $(n+1)$-dimensional Schwarzschild spacetime with mass $m \ge 0$. In Schwarzschild coordinates, the metric $\bar {g}$ is of the form
\[ \bar{g} = - (1- \frac{2m}{r^{n-2}}) dt^2 + \frac{1}{1-\frac{2m}{r^{n-2}}} dr^2 + r^2 g_{S^{n-1}}. \]
We include the case $m=0$ which corresponds to the Minkowski spacetime. 

Let $Q = r dr \wedge dt$ be the conformal Killing-Yano 2-form and $Q^2$ be the symmetric 2-tensor given by 
\[ (Q^2)_{\alpha \beta}= Q_{\alpha}^{\,\,\,\,\gamma}Q_{\gamma \beta}.\]
We need the following lemma from \cite[Lemma B.1]{thesis} relating the curvature tensor and $Q$. 
\begin{lem}
The curvature tensor $\bar{R}_{\alpha\beta\gamma\delta}$ of the Schwarzschild metric $\bar{g}$ can be expressed as
\begin{align}
\begin{split}
\bar{R}_{\alpha\beta\gamma\delta} &= \frac{2m}{r^n} \lt( \bar{g}_{\alpha\gamma} \bar{g}_{\beta\delta} - \bar{g}_{\alpha\delta} \bar{g}_{\beta\gamma} \rt) - \frac{n(n-1)m}{r^{n+2}} \lt( \frac{2}{3}	Q_{\alpha\beta} Q_{\gamma\delta} - \frac{1}{3} Q_{\alpha\gamma}Q_{\delta\beta}-\frac{1}{3}Q_{\alpha\delta}Q_{\beta\gamma} \rt) \\
&\quad - \frac{nm}{r^{n+2}} \Big( \bar{g}\circ Q^2 \Big)_{\alpha\beta\gamma\delta} \label{curvature tensor in terms of Q: Schwarzschild}
\end{split}
\end{align}
where $(\bar{g}\circ Q^2)_{\alpha\beta\gamma\delta} = \bar{g}_{\alpha\gamma} (Q^2)_{\beta\delta} - \bar{g}_{\alpha\delta} (Q^2)_{\beta\gamma} + \bar{g}_{\beta\delta} (Q^2)_{\alpha\gamma} - \bar{g}_{\beta\gamma} (Q^2)_{\alpha\delta}$.
\end{lem}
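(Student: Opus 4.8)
The plan is to establish \eqref{curvature tensor in terms of Q: Schwarzschild} by a direct verification in an orthonormal frame adapted to the static, spherically symmetric structure of $\bar g$. Writing $f(r) = 1 - 2m/r^{n-2}$, I would take the frame $e_0 = f^{-1/2}\pl_t$, $e_1 = f^{1/2}\pl_r$, together with an orthonormal frame $\{e_A\}_{A=2}^{n}$ on the round sphere of radius $r$. Because $\bar g$ is a warped product over the $(t,r)$-plane, its curvature tensor has only four essentially distinct nonvanishing component types, which I would read off either from the Christoffel symbols or, more efficiently, from the observation that the time-slice $\{t=\text{const}\}$ is totally geodesic, so the purely spatial components $\bar R_{1A1A}, \bar R_{ABAB}$ equal the sectional curvatures of the spatial metric $f^{-1}dr^2 + r^2 g_{S^{n-1}}$, while the time-space components $\bar R_{0101}, \bar R_{0A0A}$ are governed by the spatial Hessian of the static lapse $N = \sqrt f$. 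Evaluated with $f = 1 - 2m/r^{n-2}$, these are $\bar R_{0101} = \tfrac12 f'' = -\tfrac{(n-1)(n-2)m}{r^n}$, $\bar R_{0A0A} = \tfrac{f'}{2r} = \tfrac{(n-2)m}{r^n}$, $\bar R_{1A1A} = -\tfrac{f'}{2r} = -\tfrac{(n-2)m}{r^n}$, and $\bar R_{ABAB} = \tfrac{1-f}{r^2} = \tfrac{2m}{r^n}$ for $A \neq B$, with all remaining components vanishing by symmetry.

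Next I would express the two algebraic building blocks of the right-hand side in the same frame. Since $Q = r\,dr\wedge dt = -r\,e^0\wedge e^1$, it is a multiple of the area form of the timelike $\{e_0,e_1\}$-plane, so its only nonzero components are $Q_{01} = -Q_{10} = -r$. A one-line contraction then gives that $Q^2$ is supported on the same plane, with $(Q^2)_{00} = -r^2$, $(Q^2)_{11} = r^2$, and all other components zero; in particular $Q^2$ carries no sphere indices. These facts make the substitution into \eqref{curvature tensor in terms of Q: Schwarzschild} entirely mechanical: the constant-curvature term $\tfrac{2m}{r^n}(\bar g_{\alpha\gamma}\bar g_{\beta\delta} - \bar g_{\alpha\delta}\bar g_{\beta\gamma})$ contributes to all four types, the quadratic term $\tfrac23 Q_{\alpha\beta}Q_{\gamma\delta} - \tfrac13 Q_{\alpha\gamma}Q_{\delta\beta} - \tfrac13 Q_{\alpha\delta}Q_{\beta\gamma}$ contributes only to the $(0101)$-type, and the $\bar g \circ Q^2$ term contributes to the $(0101)$-, $(0A0A)$-, and $(1A1A)$-types but never to the $(ABAB)$-type.

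Finally I would match the two sides type by type, checking that the four identities of the form $(\text{number})\cdot m/r^n$ hold, and that every mixed-type component (such as the off-diagonal $(0101)$-entries, or $(ABCD)$ with unequal index pairs) vanishes on both sides automatically. I expect the only real obstacle to be bookkeeping rather than conceptual: one must keep the Lorentzian signature, the index raising in $Q^2$ and in $\bar g \circ Q^2$, and the chosen curvature sign convention consistent throughout, and in particular verify that the specific coefficients $\tfrac23, -\tfrac13, -\tfrac13$ in the quadratic term combine with the $\bar g \circ Q^2$ contribution to reproduce precisely the Weyl-type part of $\bar R_{0101}$ while shifting $\bar R_{0A0A}$ and $\bar R_{1A1A}$ by exactly $\pm(n-2)m/r^n$. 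Conceptually, this is the statement that the Petrov type D Weyl tensor of Schwarzschild is algebraically reconstructed from the principal $2$-form $Q$, which is what makes such a clean closed form possible.
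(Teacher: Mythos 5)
Your proposal is correct, and it is worth noting that the paper itself offers no proof of this lemma at all: it is imported verbatim as Lemma B.1 of the cited thesis \cite{thesis}. Your direct verification in the adapted orthonormal frame $e_0=f^{-1/2}\pl_t$, $e_1=f^{1/2}\pl_r$, $\{e_A\}$ is therefore a self-contained substitute, and the numbers check out. With $f=1-2m/r^{n-2}$ one has $f'=2(n-2)m/r^{n-1}$ and $f''=-2(n-1)(n-2)m/r^{n}$, so your four component values are the correct generalizations of the familiar $n=3$ values $(-2m/r^3,\ m/r^3,\ -m/r^3,\ 2m/r^3)$. On the algebraic side, $Q_{01}=-r$, $(Q^2)_{00}=-r^2$, $(Q^2)_{11}=r^2$ are right, and the type-by-type match works: for $(0101)$ the three terms contribute $-\tfrac{2m}{r^n}-\tfrac{n(n-1)m}{r^n}+\tfrac{2nm}{r^n}=-\tfrac{(n-1)(n-2)m}{r^n}$ (using $\tfrac23 r^2+\tfrac13 r^2=r^2$ for the quadratic $Q$-term and $(\bar g\circ Q^2)_{0101}=-2r^2$); for $(0A0A)$ and $(1A1A)$ the $\bar g\circ Q^2$ term shifts the constant-curvature contribution by $\pm\tfrac{nm}{r^n}$ to give $\pm\tfrac{(n-2)m}{r^n}$; and the $(ABAB)$ components see only the first term, giving $\tfrac{2m}{r^n}$. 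Your supporting observations (that $Q$ and $Q^2$ are supported on the $\{e_0,e_1\}$-block, so the quadratic term touches only $(0101)$ and $\bar g\circ Q^2$ never touches $(ABAB)$, and that all mixed-type components vanish on both sides by the static and spherical symmetries) are exactly the right reasons the check reduces to four scalar identities. The one thing to be explicit about in a final write-up is the curvature sign convention, since the paper later contracts this formula against the Codazzi and Ricci equations of \cite{thesis}; your components are stated in the convention consistent with the lemma as written, which is all that is needed here.
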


Let $\Sigma$ be a spacelike codimension-2 submanifold in the Schwarzschild spacetime of $(n+1)$-dimension. Let $e_n$,  $e_{n+1}$ and $\alpha_H$ be as in Remark \ref{higher dimension}. We consider a natural condition that generalizes the star-shaped condition for hypersurfaces in the Euclidean space. 
\begin{definition}\label{star-shaped}
$\Sigma$ is said to be star-shaped with respect to $Q$ if 
$Q(e_n,e_{n+1}) >0$.

\end{definition}

We are now ready to prove the main theorem in this section: 
\begin{theorem}\label{global3}
Let $n \ge 3$ and $\Sigma^{n-1} $ be a connected spacelike codimension-2 submanifold in the $(n+1)$-dimensional Schwarzschild spacetime with mass $m\geq 0$. Suppose $\alpha_H=0$ and $\Sigma$ is star-shaped with respect to $Q$ where $Q = rdr \wedge dt$.  Then, when $m>0$, $\Sigma$ lies in a time-slice; when $m=0$, $\Sigma$ lies in a totally geodesic hyperplane.
\end{theorem}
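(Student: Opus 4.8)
```latex
The plan is to mimic the proof of Theorem \ref{global 2}, but since the Schwarzschild spacetime is not flat, the Codazzi equation alone will not force a divergence-free trace-free tensor. Instead, I would exploit the conformal Killing-Yano structure of $Q$ through a Minkowski-type integral formula, which is the natural higher-curvature analogue of the argument that made the quadratic differential vanish in the flat case.

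First I would set up the frame: let $e_n = -H/|H|$ and $e_{n+1}$ be the future timelike normal orthogonal to $e_n$, and decompose $Q = Q(e_n, e_{n+1})\, e_n^* \wedge e_{n+1}^* + (\text{tangential and mixed pieces})$. The star-shaped hypothesis $Q(e_n,e_{n+1}) > 0$ guarantees the normal-normal component has a definite sign. Next I would compute the divergence of an appropriate 1-form built from $Q$ restricted to $\Sigma$ — concretely, consider the tangential 1-form $\omega_a = Q(e_{n+1}, \partial_a)$ or the restriction of $\iota_{e_n} Q$ to $\Sigma$, and compute $\nabla^a \omega_a$ using the fact that $Q$ is conformal Killing-Yano. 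The conformal Killing-Yano equation $\nabla_\alpha Q_{\beta\gamma} + \nabla_\beta Q_{\alpha\gamma} = \frac{2}{n}(\bar{g}_{\alpha\beta}\xi_\gamma - \bar{g}_{\gamma(\alpha}\xi_{\beta)})$ (with $\xi$ the associated 1-form) converts this divergence into a sum of curvature terms and terms involving the second fundamental form contracted against $Q$.

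The heart of the argument is then an integration by parts over $\Sigma$. Integrating $\nabla^a \omega_a = 0$ (or a controlled sign) and substituting the curvature expression from Lemma 7 (equation \eqref{curvature tensor in terms of Q: Schwarzschild}), I would aim to produce an integral identity in which the left side is a manifestly signed quantity — something like $\int_\Sigma \frac{nm}{r^{n+2}} Q(e_n,e_{n+1})^2 \,|H|$ or a comparable positive integrand weighted by $Q(e_n,e_{n+1})$ — and the right side is forced to vanish by the hypothesis $\alpha_H = 0$. The condition $\alpha_H = 0$ should make the cross terms coming from $\langle \nabla_{e_a}^N e_n, e_{n+1}\rangle$ drop out, exactly as in Theorem \ref{global 2}, leaving only the curvature contribution and the trace-free second fundamental form $h^{n+1}_{ab}$. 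When $m > 0$ the curvature term carries a definite sign, and combined with star-shapedness this forces $h^{n+1}_{ab} \equiv 0$ and the normal component of the relevant derivative to vanish, which by Theorem \ref{Yau} (applied to the subbundle spanned by $e_{n+1}$, now parallel because $\alpha_H=0$) places $\Sigma$ in a totally geodesic submanifold; the sign structure then identifies this with a time-slice.

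The main obstacle I anticipate is the $m=0$ versus $m>0$ dichotomy hidden in the integral formula. When $m = 0$ the curvature term in \eqref{curvature tensor in terms of Q: Schwarzschild} vanishes identically, so the integral identity degenerates and gives no information by itself; one must recover the flat argument (divergence-free trace-free tensor, Yau's theorem) as a limiting or special case, and reconcile the conclusion ``lies in a totally geodesic hyperplane'' with the Schwarzschild conclusion ``lies in a time-slice.'' Concretely, the delicate point is verifying that the boundary-free integration by parts is valid on the \emph{connected} $\Sigma$ (no closedness is assumed, only connectedness and codimension two), and that the star-shaped condition, rather than a topological sphere hypothesis as in Theorem \ref{global 2}, is genuinely what kills the quadratic-differential obstruction in the curved case. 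Getting the precise form of $\xi$ in the conformal Killing-Yano equation for $Q = r\,dr\wedge dt$ and matching constants so that the $\alpha_H$-terms cancel cleanly will require care, but it is the curvature-sign argument that does the real work.
```
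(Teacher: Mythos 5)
Your overall strategy is the one the paper actually uses: a Minkowski-type integral formula built from the conformal Killing--Yano form $Q$, simplified via the curvature expression \eqref{curvature tensor in terms of Q: Schwarzschild}, with star-shapedness supplying the sign, and Yau's theorem (Theorem \ref{Yau}) rescuing the degenerate $m=0$ case where the curvature term vanishes and one only obtains $h_{n+1}=0$. You also correctly identify where $\alpha_H=0$ must enter and why the two cases yield different conclusions.

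The genuine gap is the choice of the quantity whose divergence is integrated, and that choice is the crux of the proof. Neither of your candidates works. For $\omega_b=Q(e_n,\partial_b)$ (the restriction of $\iota_{e_n}Q$ to $\Sigma$), the conformal Killing--Yano equation gives a trace of the form $\nabla^a\omega_a=(n-1)\langle \tfrac{\partial}{\partial t},e_n\rangle-\mathrm{tr}_\sigma(h_{n+1})\,Q(e_{n+1},e_n)-h_n^{\;ac}Q_{ac}$; the last term vanishes by symmetry, $\mathrm{tr}_\sigma(h_{n+1})=0$ because $e_{n+1}\perp H$, and integration yields only the true but useless identity $\int_\Sigma\langle\tfrac{\partial}{\partial t},e_n\rangle=0$ --- no $|h_{n+1}|^2$ term and no curvature term ever appears, and the Codazzi equation (the only place $\alpha_H=0$ can do real work) is never invoked. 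The quantity the paper integrates is
\[
\nabla_a\Bigl[\bigl(\mathrm{tr}_\sigma(h_{n+1})\,\sigma^{ab}-h_{n+1}^{\;\;\;\;\;ab}\bigr)Q_{nb}\Bigr],
\]
i.e.\ the Newton (Einstein) tensor of $h_{n+1}$ contracted with $\iota_{e_n}Q$. That factor is what lets $\alpha_H=0$ enter through the Codazzi equation, $\nabla_a(\mathrm{tr}_\sigma(h_{n+1})\sigma^{ab}-h_{n+1}^{\;\;\;\;\;ab})=-\bar{R}^{ab}_{\;\;\;a,n+1}$, and through the Ricci equation (to convert $Q_{bc}h_{na}^{\;\;c}h_{n+1}^{\;\;\;\;\;ab}$ into a curvature term); it is also what produces the signed term $-|h_{n+1}|^2\,Q(e_n,e_{n+1})$ alongside the curvature contribution. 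One further needs the algebraic identity $Q^{ab}Q_{a,n+1}Q_{bn}=-\tfrac{1}{2}Q^{ab}Q_{ab}\,Q_{n,n+1}$ to collapse the curvature terms into a manifestly nonnegative integrand. Without identifying this particular flux, the ``curvature-sign argument that does the real work,'' as you put it, cannot get started. (Also, the boundary-free integration does require $\Sigma$ to be closed, not merely connected; that is implicit in the paper's setting.)
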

\begin{proof} Let $\sigma$ denote the induced metric on $\Sigma$. Let $h_n$ and $h_{n+1}$ be the second fundamental forms with respect to $e_n$ and $e_{n+1}$, respectively. We pick a tangential basis $\partial_a, a=1,\cdots n-1$ and express the components of a tensor in terms of this basis together with $e_n$ and $e_{n+1}$. For example, $Q_{nb}=Q(e_n, \partial_b)$. In the following computations, we lower and raise indices with respect to the induced metric $\sigma_{ab}=\sigma(\partial_a,\partial_b)$ and its inverse $\sigma^{ab}$.  Consider the divergence quantity on $\Sigma$:
\begin{equation}\label{divergence} \na_a \lt[ \lt( \mbox{tr}_\sigma(h_{n+1}) \sigma^{ab} - h_{n+1}^{\;\;\;\;\;\;ab} \rt) Q_{nb} \rt]. \end{equation}

By the assumption $\alpha_H=0$ and the Codazzi equation (see for example \cite[Theorem 2.1]{thesis}), we obtain 
\begin{align*}
\na_a \lt( \mbox{tr}_\sigma(h_{n+1}) \sigma^{ab} - h_{n+1}^{\;\;\;\;\;\;ab} \rt) = -\bar{R}^{ab}_{\;\;\;\;a,n+1}. 
\end{align*}
On the other hand, $Q$ satisfies the conformal Killing-Yano equation \cite[Definition 1]{JL}
\[ D_\alpha Q_{\beta\gamma} + D_\beta Q_{\alpha\gamma} = \frac{2}{n} \lt( \bar{g}_{\alpha\beta} \xi_\gamma - \frac{1}{2} \bar{g}_{\alpha\gamma} \xi_\beta - \frac{1}{2} \bar{g}_{\beta\gamma} \xi_\alpha \rt)\]
where $\xi^\beta = D_\alpha Q^{\alpha\beta}$. In our case $Q = rdr \wedge dt$ and $\xi = -n \frac{\pl}{\pl t}$. Therefore, 
\begin{align*}
   &\frac{1}{2} \Big( \na_a (Q(e_n,\pl_b)) + \na_b (Q(e_n,\pl_a)) \Big) \\
= & \sigma_{ab} \lt\langle \frac{\pl}{\pl t}, e_n \rt\rangle - h_{n+1,ab}Q_{n+1,n}  -\frac{1}{2} ( Q_{bc} h_{na}^{\;\;\;\;c} + Q_{ac} h_{nb}^c ). 
\end{align*}

The assumption $\alpha_H=0$ and Ricci equation (see for example \cite[Theorem 2.1]{thesis}) together imply
\begin{align*}
Q_{bc} h_{na}^{\;\;\;\;c} h_{n+1}^{\;\;\;\;\;\;ab} = \frac{1}{2} \bar{R}^{ab}_{\;\;\;\;n+1,n} Q_{ba}.
\end{align*} 

Therefore, (\ref{divergence}) becomes
\[ -|h_{n+1}|^2 Q_{n,n+1}  -\bar{R}^{ab}_{\;\;\;\;a,n+1}Q_{nb} + \frac{1}{2} \bar{R}^{ab}_{\;\;\;\;n+1,n}Q_{ba}. \]  Here we use the fact $\mbox{tr}_\sigma (h_{n+1})=0$. In the following, we apply the curvature formula (\ref{curvature tensor in terms of Q: Schwarzschild}) and an algebraic relation of components of $Q$ to simplify the last two terms. 

By \eqref{curvature tensor in terms of Q: Schwarzschild},
\begin{align*}
\bar{R}^{ab}_{\;\;\;\;a,n+1} &= -\frac{n(n-1)m}{r^{n+2}} Q^{ab}Q_{a,n+1} - \frac{n(n-2)m}{r^{n+2}} \lt( -Q^{ab}Q_{a,n+1} + Q^b_{\;\;n} Q_{n,n+1} \rt) \\
\bar{R}^{ab}_{\;\;\;\;n+1,n} &= -\frac{n(n-1)m}{r^{n+2}} \lt( \frac{2}{3} Q^{ab}Q_{n+1,n} - \frac{1}{3} Q^a_{\;\;n+1} Q_n^{\;\;b} - \frac{1}{3}Q^a_{\;\;n}Q^b_{\;\;n+1} \rt),
\end{align*}
and thus

\begin{align*}
&-\bar{R}^{ab}_{\;\;\;\;a,n+1}Q_{nb} + \frac{1}{2} \bar{R}^{ab}_{\;\;\;\;n+1,n}Q_{ba} \\
= & \frac{nm}{r^{n+2}} \Big[ (n-1) Q^{ab}Q_{a,n+1}Q_{nb} -(n-2)Q^{ab}Q_{a,n+1}Q_{nb} + (n-2)Q^{bn}Q_{nb}Q_{n,n+1} \\
&\quad - \frac{n-1}{3} (Q^{ab}Q_{ba}Q_{n+1,n} - Q^a_{\;\;n+1}Q_n^{\;\;b}Q_{ba}) \Big]. 
\end{align*}
From \cite[Lemma B.3]{thesis}, we have
\begin{align*}
Q^{ab}Q_{a,n+1}Q_{bn} = -\frac{1}{2}Q^{ab}Q_{ab}Q_{n,n+1}.
\end{align*}  
Therefore, 
\begin{align*}
\bar{R}^{ab}_{\;\;\;\;a,n+1}Q_{nb} - \frac{1}{2} \bar{R}^{ab}_{\;\;\;\;n+1,n}Q_{ba} = \frac{n(n-2)m}{r^{n+2}}  \lt( \frac{1}{2}Q^{ab}Q_{ab}Q_{n,n+1} + Q^b_{\;\;n}Q_{bn}Q_{n,n+1} \rt), 
\end{align*} 
and we obtain
\begin{align*}
0 &= \int_\Sigma \na_a \lt[ \lt( \mbox{tr}_\sigma(h_{n+1}) \sigma^{ab} - h_{n+1}^{\;\;\;\;\;\;ab} \rt) Q_{nb} \rt] d\mu \\
&= -\int_\Sigma \lt[ |h_{n+1}|^2 +\frac{n(n-2)m}{r^{n+2}} \lt(	\frac{1}{2}Q^{ab}Q_{ab} + Q^b_{\;\;n}Q_{bn} \rt) \rt] Q_{n,n+1} d\mu.
\end{align*}
If $m>0$, we obtain $Q_{ab} = Q_{bn}=0$. As $Q = r dr \wedge dt$, it is not hard to see that $\frac{\pl}{\pl t}$ is orthogonal to $\Sigma$. Hence $\Sigma$ lies on a time-slice. If $m=0$, we can only deduce $h_{n+1}=0$. However, this is the case of the Minkowski spacetime on which  Theorem \ref{Yau} is applicable. We conclude that $\Sigma$ lies in a totally geodesic hyperplane of the Minkowski spacetime. 
\end{proof}

\begin{remark}
Theorem \ref{global3} holds on a class of spherically symmetric spacetimes that satisfy null convergence condition. See \cite[Theorem 5.11]{thesis}.
\end{remark}
\section*{Acknowledgements}{P.-N. Chen is supported by NSF grant DMS-1308164 and M.-T. Wang is supported by NSF grants DMS-1105483 and
DMS-1405152. All authors would like to thank H. Bray for raising the question in the title and for inspiring discussions during his visit to Columbia Math Department on October 11, 2013.}
{
\end{document}
\begin{thebibliography}{99}  



\bibitem{Bray} H. Bray, \textit{Personal communications.}
\bibitem{Bray-Jauregui} H. Bray and J. Jauregui, \textit{Time flat surfaces and the monotonicity of the spacetime Hawking mass}, arXiv:1310.8638
\bibitem{Bray-Jauregui2} H. Bray and J. Jauregui, \textit{On curves with nonnegative torsion}, arXiv:1312.5171
\bibitem{Brown-York} J. D. Brown\ and\ J. W. York, Jr., Quasilocal energy in general relativity, in {\it Mathematical aspects of classical field theory (Seattle, WA, 1991)}, 129--142, Contemp. Math., 132, Amer. Math. Soc., Providence, RI.


\bibitem{Chen-Wang-Yau1} P. Chen, M.-T. Wang, and S.-T. Yau, \textit{Evaluating quasilocal energy and solving optimal embedding equation at null infinity}, Comm. Math. Phys. \textbf{308} (2011), no.3, 845--863.
\bibitem{Chen-Wang-Yau2} P. Chen, M.-T. Wang, and S.-T. Yau, \textit{Minimizing properties of critical points of quasi-local energy}, Comm. Math. Phys. {\bf 329} (2014), no. 3, 919--935.
\bibitem{do Carmo} M. P. do Carmo, {\it Differential geometry of curves and surfaces}, translated from the Portuguese, Prentice Hall, Englewood Cliffs, NJ, 1976.
\bibitem{Hawking} S. W. ~Hawking, {\it Gravitational radiation in an expanding universe}, J. Math. Phys., 9, 598--604, (1968).

\bibitem{JL}
J. Jezierski and M. {\L}ukasik.
\newblock J. Jezierski\ and\ M. \L ukasik, \textit{Conformal Yano-Killing tensor for the Kerr metric and conserved quantities}, Classical Quantum Gravity {\bf 23} (2006), no.~9, 2895--2918.
\bibitem{Liu-Yau}
C.-C. M. Liu\ and\ S.-T. Yau, \textit{Positivity of quasilocal mass}, Phys. Rev. Lett. {\bf 90} (2003), no.~23, 231102, 4 pp.
\bibitem{Miao-Tam-Xie} P. Miao, L.-F. Tam\ and\ N. Xie, \textit{Critical points of Wang-Yau quasi-local energy}, Ann. Henri Poincar\'e {\bf 12} (2011), no.~5, 987--1017.




    
\bibitem{Reilly} R. C. Reilly, \textit{Applications of the Hessian operator in a Riemannian manifold}, Indiana Univ. Math. J. {\bf 26} (1977), no.~3, 459--472.


\bibitem{Wang-Yau1} M.-T. Wang\ and\ S.-T. Yau, \textit{Quasilocal mass in general relativity}, Phys. Rev. Lett. {\bf 102} (2009), no.~2, no. 021101, 4 pp.
\bibitem{Wang-Yau2} M.-T. Wang\ and\ S.-T. Yau, \textit{Isometric embeddings into the Minkowski space and new quasi-local mass}, Comm. Math. Phys. {\bf 288} (2009), no.~3, 919--942.


\bibitem{thesis} Y.-K. Wang, {\it A Spacetime Alexandrov Theorem}, Thesis (Ph.D.)  Columbia University. 2014.
\bibitem{Weiner1} J. L. Weiner,  {\it Closed curves of constant torsion}, Arch. Math. (Basel) {\bf 25} (1974), 313--317.
\bibitem{Weiner2} J. L. Weiner, {\it Closed curves of constant torsion. II}, Proc. Amer. Math. Soc. {\bf 67} (1977), no. 2, 306--308. 
\bibitem{Yau} S. T. Yau, \textit{Submanifolds with constant mean curvature. I, II}, Amer. J. Math. {\bf 96} (1974), 346--366; ibid. {\bf 97} (1975), 76--100.

\end{thebibliography}
